\newtheorem{theorem}{Theorem}[section]
\newtheorem{lemma}[theorem]{Lemma}
\newtheorem{defn}[theorem]{Definition}
\newtheorem{proposition}[theorem]{Proposition}
\newtheorem{question}[theorem]{Question}
\newtheorem{cor}[theorem]{Corollary}
\numberwithin{equation}{section}
\newcommand{\E}{\mathbb{E}}
\newcommand{\V}{\Vert}
\newcommand{\la}{\langle}
\newcommand{\ra}{\rangle}
\title{Free Products and the Lack of State preserving approximations of Nuclear C*-algebras}
\author{Caleb Eckhardt}
\address{Department of Mathematics, Purdue University, West Lafayette, IN, 47906}
\email{caeckhar@math.purdue.edu}
\date{}
\begin{document}
\begin{abstract} Let $A$ be a homogeneous C*-algebra and $\phi$ a state on $A.$  We show that if $\phi$ satisfies a certain faithfulness condition, then there is a net of finite-rank, unital completely positive, $\phi$-preserving maps on $A$ that tend to the identity pointwise. This combined with results of Ricard and Xu show that the reduced free product of homogeneous C*-algebras with respect to these states have the completely contractive approximation property.  We also give an example of a faithful state on $M_2\otimes C[0,1]$ for which no such state-preserving approximation of the identity map exists, thus answering a question of Ricard and Xu.  
\end{abstract}
\thanks{A portion of this work was completed while the author was funded by the research program ANR-06-BLAN-0015}
\maketitle
\section{Introduction}
It is not  completely understood which approximation properties of C*-algebras are preserved by reduced free products. One of the most satisfying positive results was obtained by  Dykema \cite{Dykema04} when he showed that exactness is preserved by reduced free products. On the other hand the non-nuclear C*-algebra $C^*_r(\mathbb{F}_2)$ is isomorphic to $C^*_r(\mathbb{Z})\ast_r C^*_r(\mathbb{Z})$ (with respect to the canonical trace), hence nuclearity is not preserved by reduced free products.  But J. de Canni{\`e}re and  Haagerup showed in \cite{Canniere85} that $C^*_r(\mathbb{F}_2)$ does have the completely contractive approximation property (CCAP). Recall that a C*-algebra has the CCAP if there is a net of finite rank, complete contractions that converge to the identity pointwise, and that the CCAP is strictly weaker than nuclearity, yet strictly stronger than exactness.  Bo{\.z}ejko and Picardello extended this result in \cite{Bozejko93} by showing that the reduced C*-algebras of free products of amenable groups have the CCAP, and Ricard and Xu \cite{Ricard06} extended this to the case of weakly amenable groups (with constant 1). Weak amenability (with constant 1) is the group theoretic analog of the CCAP, hence it is  natural to ask if the CCAP is preserved by reduced free products. This problem is still open, in fact it is unknown if the reduced free product of nuclear C*-algebras has the CCAP.
 
The goals of this paper are (i) to show that reduced free products of homogeneous C*-algebras with respect to certain well-behaved states have the CCAP and  (ii) to answer a question of Ricard and Xu by giving examples of states on nuclear C*-algebras that are not CP-approximable: 

\begin{defn} Let $A$ be a unital, nuclear C*-algebra and $\phi$ a state on $A.$ We say that $\phi$ is \textbf{CP-approximable} if there is a net $(T_\alpha)$ of finite-rank, unital, completely positive maps on $A$ such that $T_\alpha$ converges to the identity pointwise and $\phi\circ T_\alpha=\phi$ for all $\alpha.$
\end{defn}
Our goals are related by the following theorem of Ricard and Xu:
\begin{theorem}[Ricard \& Xu, \cite{Ricard06}] \label{thm:ericandxu} Let $(A_i,\phi_i)$ be a family of nuclear C*-algebras with states $\phi_i$ such that the corresponding GNS representations are faithful.  If each $\phi_i$ is CP-approximable, then the reduced free product of $(A_i,\phi_i)$ has the CCAP.
\end{theorem}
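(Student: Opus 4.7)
The plan is to form a ``free product'' of the given CP-approximations on each factor and then cut the resulting map down to finite rank using radial multipliers on the reduced free product.

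Write $A = \ast_i (A_i,\phi_i)$ for the reduced free product and let $(T_\alpha^i)$ be the given finite-rank, UCP, $\phi_i$-preserving approximations of $\mathrm{id}_{A_i}$. First, because each $T_\alpha^i$ is UCP and state-preserving and the GNS representations of $\phi_i$ are faithful, the standard machinery for free products of state-preserving UCP maps (going back to Choi--Effros and Voiculescu, made explicit by Boca and Blanchard--Dykema) produces a UCP map $T_\alpha := \ast_i T_\alpha^i$ on $A$. Pointwise convergence on each factor extends, by a density and uniform-boundedness argument using reduced words, to $T_\alpha \to \mathrm{id}_A$ pointwise on $A$. The difficulty is that $T_\alpha$ is generally not finite rank: when $A$ is decomposed along reduced words $a_1 a_2 \cdots a_n$ with $a_k \in \ker \phi_{i_k}$ and $i_k \neq i_{k+1}$, the map $T_\alpha$ acts on each length-$n$ word subspace as a tensor product of the restrictions $T_\alpha^i|_{\ker \phi_i}$; each such restriction has finite rank, but summing over all $n$ gives infinite rank.

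Second, truncate in word length. Define the radial multiplier $m_r$ (for $0 < r < 1$) by $m_r(a_1 \cdots a_n) = r^n\, a_1 \cdots a_n$ on reduced words. The crucial technical input from \cite{Ricard06} is a Khintchine-type inequality showing that $m_r$ extends to a completely bounded map on $A$ with $\|m_r\|_{cb} \to 1$ as $r \nearrow 1$. Composing and truncating, the restriction of $m_r \circ T_\alpha$ to words of length $\leq N$ is a finite-rank completely bounded map that differs from $m_r \circ T_\alpha$ by an error of cb-norm $O(r^N)$. Hence for appropriate choices of $r$ close to $1$ and $N$ large, we obtain finite-rank completely bounded maps on $A$ with cb-norm close to $1$; normalizing by the cb-norm and taking a diagonal net over the parameters $(\alpha, r, N)$ yields the required net of finite-rank complete contractions converging to $\mathrm{id}_A$ pointwise.

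The main obstacle is establishing the Khintchine-type estimate for $m_r$: one must show that $m_r$, which is a priori only defined on the algebraic free product, extends to a completely bounded map on the $C^*$-algebraic reduced free product, with cb-norm tending to $1$. This is the technical heart of \cite{Ricard06} and is obtained via the natural Fock space model of the reduced free product together with the combinatorics of reduced-word products. Nuclearity of each $A_i$ plays only a supporting role through the assumed CP-approximability, which is what actually drives the construction.
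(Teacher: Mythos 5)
The paper does not prove this statement at all: it is quoted verbatim from Ricard and Xu and used as a black box, so the only ``proof'' in the paper is the citation to \cite{Ricard06}. Your proposal is therefore best judged as a reconstruction of the cited argument, and as such it gets the architecture right (Boca/Blanchard--Dykema free products of state-preserving UCP maps, word-length truncation, Khintchine-type complete boundedness estimates), but it misplaces the role of the key estimate. In the Ricard--Xu scheme the radial multiplier $m_r$ needs no Khintchine inequality and is not merely completely bounded with norm tending to $1$: it is exactly the free product of the state-preserving UCP maps $r\,\mathrm{id}_{A_i}+(1-r)\phi_i(\cdot)1$, which on a reduced word $a_1\cdots a_n$ with $a_k\in\ker\phi_{i_k}$ gives $r^n a_1\cdots a_n$; hence $m_r$ is UCP of cb-norm exactly $1$. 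Where the Khintchine-type inequality genuinely enters is in bounding the cb-norm of the projections $P_n$ onto words of length $n$ (polynomial growth in $n$), which is what makes the tail $\sum_{n>N} r^n P_n\circ(\ast_i T^i_\alpha)$ small in cb-norm; your claim that the truncation error is $O(r^N)$ silently presupposes exactly this bound, and without it the truncation step has no justification. Two smaller gaps: finite rank of the truncated map requires in addition a compression to finitely many free factors (via the state-preserving conditional expectations onto subproducts) when the index set is infinite, and the hypothesis that the GNS representations are faithful is what makes the free product maps and these expectations well defined on the reduced free product --- your sketch uses it only in passing. Since you explicitly defer the Khintchine estimate to \cite{Ricard06}, your write-up is an outline of the cited proof rather than an independent argument, which is consistent with how the paper itself treats the theorem, but the step you label as the heart of the matter is attributed to the wrong place in the proof.
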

This theorem then provides a promising strategy for showing that free products of nuclear C*-algebra have the CCAP, and already covers many cases of interest.  Indeed, one  sees that all product states on UHF algebras are CP-approximable and that all states on a commutative C*-algebra, or a subalgebra of the compact operators are CP-approximable.

Therefore, we would like to know if all states on nuclear C*-algebras are CP-approximable.  In Section \ref{sec:nonCPapp}, we give an example of a faithful state on $M_2\otimes C[0,1]$ which is not CP-approximable.  On the other hand, in Section \ref{sec:GNSfaith} we show that if $A$ is a homogeneous C*-algebra and $\phi$ is a GNS-faithful state (Definition \ref{defn:GNSfaith}) then $\phi$ is CP-approximable.  This, combined with Theorem \ref{thm:ericandxu},  shows that the reduced free product of homogeneous C*-algebras with respect to GNS-faithful states have the CCAP. In Section \ref{sec:further} we discuss some open questions raised by this work.

We will use the following notation throughout: For a C*-algebra $A$, we let $A^+$ denote the positive cone and we write $M_n$ for the $n\times n$ matrices over $\mathbb{C}.$  For a linear map $T$ between C*-algebras we define the map $T^*$ as $T^*(x)=T(x^*)^*.$  For sets $X\subseteq Y$ we let $1_X$ denote the characteristic function of $X$ and $X^c$ denote the complement of $X.$ We write UCP as shorthand for the phrase ``unital completely positive.'' 
\section{A non CP-approximable faithful state on $M_2\otimes C[0,1]$} \label{sec:nonCPapp}
In this section, we  give an example of a faithful, non CP-approximable state on $M_2\otimes C[0,1].$  We first show that the approximating maps for CP-approximable diagonal states  can be manipulated to take a nice form. 

\subsection{Technical reformulation}
Let $n\in \mathbb{N}$ and $A$ be a unital C*-algebra. Let $e_{ij}$ denote the standard matrix units on $M_n\otimes 1_A\subseteq M_n\otimes A.$ 

\begin{defn} Let $\phi$ be a state on $M_n\otimes A.$  We say that $\phi$ is \textbf{diagonal} if $\phi(e_{ij}\otimes a)=0$ for all $a\in A$ whenever $i\neq j.$
\end{defn}

\begin{lemma} \label{lem:reformlem} Let A be a unital C*-algebra and $\phi$ a faithful, CP-approximable, diagonal state on $M_n\otimes A.$  Then, for every $1\leq i\leq j\leq n$ there are nets of finite rank maps $R_{ij,\alpha}:A\rightarrow A$ such that
\begin{equation*}
R_\alpha=\left[ \begin{array}{ccccc} R_{11,\alpha}& R_{12,\alpha}& \cdot &\cdot & R_{1n,\alpha}\\
R_{12,\alpha}^* & R_{22,\alpha}& \cdot &\cdot & R_{2n,\alpha}\\
\cdot&\cdot&\cdot&\cdot&\cdot\\
R_{1n,\alpha}^* &R_{2n,\alpha}^*&\cdot&\cdot& R_{nn,\alpha}\\ \end{array} \right],
\end{equation*}  
is UCP with   $\phi\circ R_\alpha=\phi$ for every $\alpha$ and $R_\alpha$ converges to the identity pointwise.
\end{lemma}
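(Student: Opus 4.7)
The plan is to symmetrize a CP-approximating net $(T_\alpha)$ of finite-rank UCP $\phi$-preserving maps on $M_n\otimes A$ (supplied by CP-approximability) via Haar-averaging over the action of the diagonal torus on $M_n\otimes A$ by inner automorphism. For $z=(z_1,\ldots,z_n)\in\mathbb{T}^n$ I would set $D_z:=\sum_k z_k(e_{kk}\otimes 1)$ and define
\[
R_\alpha(x):=\int_{\mathbb{T}^n}D_z\,T_\alpha(D_z^{*}xD_z)\,D_z^{*}\,dz
\]
against normalized Haar measure.

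First I would check that $R_\alpha$ inherits the four required properties. Each map $x\mapsto D_zT_\alpha(D_z^{*}xD_z)D_z^{*}$ is UCP, so the averaged $R_\alpha$ is UCP. The integrand is a trigonometric polynomial in $z$ taking values in the finite-dimensional range of $T_\alpha$, so integration selects finitely many Fourier modes, preserving finite-rankness. Because $\phi$ is diagonal, $\phi\circ\mathrm{Ad}(D_z)=\phi$ for every $z$, hence $\phi\circ R_\alpha=\phi$. For pointwise convergence $R_\alpha\to\mathrm{id}$, note that $\{D_z^{*}xD_z:z\in\mathbb{T}^n\}$ is a continuous image of a compact set, hence compact; the uniformly bounded net $(T_\alpha)$ converges uniformly on it, so the integrand converges uniformly in $z$ to $x$ and the integral to $x$.

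The matrix form is extracted by a Fourier computation. Writing $T_\alpha(e_{kl}\otimes a)=\sum_{ij}e_{ij}\otimes T_{ij}^{kl,\alpha}(a)$, the coefficient $\int z_i\bar z_j\bar z_kz_l\,dz$ vanishes unless the multisets $\{i,l\}$ and $\{j,k\}$ coincide; for off-diagonal input $k\neq l$ this forces $(i,j)=(k,l)$, yielding $R_\alpha(e_{kl}\otimes a)=e_{kl}\otimes T_{kl}^{kl,\alpha}(a)$. Setting $R_{kl,\alpha}:=T_{kl}^{kl,\alpha}$ for $k\le l$, the symmetry $R_{lk,\alpha}=R_{kl,\alpha}^{*}$ displayed in the lower triangle is inherited from $R_\alpha$ being a real map, i.e., $R_\alpha(x^{*})=R_\alpha(x)^{*}$.

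The point I expect to demand the most care is the diagonal-to-diagonal block: the same calculation leaves $R_\alpha(e_{kk}\otimes a)=\sum_i e_{ii}\otimes T_{ii}^{kk,\alpha}(a)$, which spreads the output across all diagonal entries instead of collapsing to a single Schur term $e_{kk}\otimes R_{kk,\alpha}(a)$. Reconciling this with the displayed matrix requires a separate argument using faithfulness of the marginal states $\psi_k:=\phi(e_{kk}\otimes\cdot)/\phi(e_{kk})$ together with the $\phi$-preservation identity $\sum_i\phi(e_{ii})\psi_i(T_{ii}^{kk,\alpha}(\cdot))=\phi(e_{kk})\psi_k(\cdot)$ to repackage the diagonal block as a single UCP $\psi_k$-preserving map $R_{kk,\alpha}$ on $A$, restoring unitality by a rank-one CP correction driven by a suitably chosen state on $A$. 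The delicate verification is that the reassembled Schur-form operator remains completely positive on all of $M_n\otimes A$; I would arrange this by carrying out the modification inside a single Stinespring dilation of $T_\alpha$ rather than correcting entries piecewise, so that CP is preserved by construction.
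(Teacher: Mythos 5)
Your torus-averaging step is correct as far as it goes: the averaged maps are UCP, finite rank, $\phi$-preserving, converge pointwise to the identity, and have the required Schur form on the off-diagonal blocks. But the lemma is not proved, because the diagonal blocks --- the point you yourself flag as delicate --- are exactly where the content lies, and there your argument is a statement of intent rather than a construction. After averaging you have $R_\alpha(e_{kk}\otimes a)=\sum_i e_{ii}\otimes T^{kk,\alpha}_{ii}(a)$, and to reach the displayed form you must replace this by a single term $e_{kk}\otimes R_{kk,\alpha}(a)$ while simultaneously keeping complete positivity of the whole $n\times n$ map, unitality, $\phi$-preservation, finite rank, and convergence to the identity. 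The devices you invoke (faithfulness of the marginals $\psi_k$, the identity $\sum_i\phi(e_{ii})\psi_i(T^{kk,\alpha}_{ii}(\cdot))=\phi(e_{kk})\psi_k(\cdot)$, ``a rank-one CP correction,'' ``carrying out the modification inside a single Stinespring dilation \dots so that CP is preserved by construction'') do not amount to a proof: simply discarding the terms with $i\neq k$ destroys unitality and $\phi$-preservation, and the simultaneous repair of both without leaving the class of completely positive Schur-form maps is precisely what needs an argument.

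For comparison, the paper block-diagonalizes by a two-sided compression rather than by averaging: $\widetilde S(x)=\sum_{i,j}e_{ii}S(e_{ii}xe_{jj})e_{jj}$. This is completely positive, being the compression by the row matrix $(e_{11},\dots,e_{nn})$ of the CP map $x\mapsto\big[S(e_{ii}xe_{jj})\big]_{i,j}\in M_n(M_n\otimes A)$ (note that $x\mapsto\big[e_{ii}xe_{jj}\big]_{i,j}$ is of the form $CxC^*$), and it sends $e_{ij}\otimes A$ into $e_{ij}\otimes A$ for \emph{all} pairs $(i,j)$, including $i=j$, so the diagonal problem never arises. What the compression loses is unitality and $\phi$-preservation on the diagonal, and these are restored explicitly: the functionals $x\mapsto\phi(e_{kk}xe_{kk})-\phi(e_{kk}S(e_{kk}xe_{kk})e_{kk})$ are positive because $\phi$ is diagonal (so $\phi=\sum_i\phi(e_{ii}\,\cdot\,e_{ii})$) and $S$ is $\phi$-preserving, hence the maps $T_k(x)=\phi(e_{kk})^{-1}\phi(e_{kk}xe_{kk}-e_{kk}S(e_{kk}xe_{kk})e_{kk})\,e_{kk}$ are CP and $R=\widetilde S+\sum_k T_k$ is $\phi$-preserving and still of Schur form; unitality is then recovered by the Ricard--Xu renormalization (conjugation by the diagonal matrix of $\Vert R(e_{kk})\Vert^{-1/2}$ together with the maps $\widetilde T_k$), and all corrections tend to zero along the net since $T_\alpha\to\mathrm{id}$. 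If you want to keep the averaging route, you must still supply an argument of this kind for the diagonal blocks; as written, your proof has a genuine gap at that step.
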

\begin{proof}
Let $S$ be a UCP map on $M_n\otimes A$ such that $\phi\circ S=\phi$ and  $e_{ii}S(e_{ii})e_{ii}\neq0$ for $i=1,...,n.$  Define the map $\widetilde{S}$ on $M_n\otimes A$ by
\begin{equation*}
\widetilde{S}(x)=\sum_{i,j=1}^n e_{ii}S(e_{ii}xe_{jj})e_{jj}
\end{equation*}
Then, clearly $\widetilde{S}$ is completely positive and maps $e_{ij}\otimes A$ into $e_{ij}\otimes A.$  We now slightly modify $\widetilde{S}$ to make it unital and $\phi$-preserving. For each $k=1,...,n$ define 
\begin{equation*}
T_k(x)=\frac{\phi(e_{kk}xe_{kk}-e_{kk}S(e_{kk}xe_{kk})e_{kk})}{\phi(e_{kk})}e_{kk}.
\end{equation*}
Since $\phi$ is diagonal and  $S$ is $\phi$-preserving, it follows that each $T_k$ is completely positive and that $R=\widetilde{S}+\sum_{k=1}^n T_k$ is $\phi$-preserving.  As in \cite[Proposition 4.5]{Ricard06}, we define
\begin{equation*}
\widetilde{T}_k(x)=\frac{1}{\V R(e_{kk})\V}R(e_{kk}xe_{kk})+\frac{\phi(e_{kk}xe_{kk})}{\phi(e_{kk})}\Big[e_{kk}-\frac{1}{\V R(e_{kk})\V}R(e_{kk})\Big]
\end{equation*}
Let $V$ be the diagonal matrix $diag(\V R(e_{11})\V^{-\frac{1}{2}},\V R(e_{22})\V^{-\frac{1}{2}},...,\V R(e_{nn})\V^{-\frac{1}{2}}).$  We finally define
\begin{equation*}
\widetilde{R}(x)=V R(x)V +\sum_{k=1}^n \widetilde{T}_k(x).
\end{equation*}
Then $\widetilde{R}$ is unital $\phi$-preserving and maps $e_{ij}\otimes A$ into $e_{ij}\otimes A$ for each $1\leq i,j\leq n.$
The conclusion of the lemma now follows from the above construction.
\end{proof}
\subsection{Construction of the example}
Let $m$ denote Lebesgue measure on $[0,1].$ By \cite{Rudin83}, there is a Lebesgue measurable subset $X\subseteq [0,1]$   such that for every non-empty open subset $\mathcal{O}\subseteq [0,1]$ we have 
\begin{equation}
m(X\cap\mathcal{O})>0,  \textrm{ and }m(X^c\cap\mathcal{O})>0. \label{eq:phifaithful}
\end{equation}
Define the diagonal state $\phi$ on $M_2\otimes C[0,1]$ as
\begin{equation}
\phi=\left[ \begin{array}{cc} 1_X \textrm{ }dm & 0\\ 0& 1_{X^c} \textrm{ }dm\\ \end{array} \right],
\end{equation}
i.e. $\phi((f_{ij})_{i,j})=\int_X f_{11}\textrm{ }dm+\int_{X^c} f_{22}\textrm{ }dm.$
By (\ref{eq:phifaithful}) it follows that $\phi$ is faithful on $M_2\otimes C[0,1].$
\begin{proposition} \label{prop:example} Let 
\begin{equation*}
S=\left[ \begin{array}{cc} S_{11}& S_{12}\\ S_{12}^* & S_{22}\\ \end{array} \right]
\end{equation*}
be a finite rank, $\phi$-preserving UCP map on $M_2\otimes C[0,1].$  Then $S_{12}=0.$ In particular, $\phi$ is not CP-approximable.
\end{proposition}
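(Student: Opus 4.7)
The plan is to apply the Kadison--Schwarz inequality to the off-diagonal element $e_{12}\otimes f$ (and its adjoint), combine the resulting pointwise inequalities with the state-preservation hypothesis to show that $S_{12}$ extends to an $L^2$-contraction on $L^2([0,1],dm)$ that commutes with multiplication by $1_X$, and then exploit the pathological nature of $X$ from \eqref{eq:phifaithful} to force the finite-dimensional image of $S_{12}$ to be zero.

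First, set $a=e_{12}\otimes f$ for $f\in C[0,1]$, so that $a^*a=e_{22}\otimes|f|^2$ and $aa^*=e_{11}\otimes|f|^2$. Since $S$ is UCP, Kadison--Schwarz applied to $a$ and to $a^*$ yields in the block form the pointwise inequalities
\[
S_{22}(|f|^2)\geq|S_{12}(f)|^2,\qquad S_{11}(|f|^2)\geq|S_{12}(f)|^2
\]
in $C[0,1]$. Integrating these against $1_{X^c}\,dm$ and $1_X\,dm$ respectively, and invoking $\phi\circ S=\phi$ applied to $e_{22}\otimes|f|^2$ and $e_{11}\otimes|f|^2$, I obtain
\[
\int_X|S_{12}(f)|^2\,dm\leq\int_X|f|^2\,dm,\qquad \int_{X^c}|S_{12}(f)|^2\,dm\leq\int_{X^c}|f|^2\,dm.
\]

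Summing these shows $S_{12}$ is $L^2$-contractive and hence extends uniquely to a bounded operator $\widetilde{S}_{12}$ on $L^2([0,1],dm)$. Since $S_{12}$ has finite rank with image $V=\mathrm{span}(g_1,\ldots,g_N)\subseteq C[0,1]$, and since linear independence of continuous functions in $L^2$ coincides with linear independence in $C[0,1]$, the extension $\widetilde{S}_{12}$ is finite rank with the same image $V$. Substituting $f=1_{X^c}h$ and $f=1_Xh$ for $h\in L^2$ into the two displayed inequalities forces $1_X\widetilde{S}_{12}(1_{X^c}h)=0$ and $1_{X^c}\widetilde{S}_{12}(1_Xh)=0$, equivalently that $\widetilde{S}_{12}$ commutes with multiplication by $1_X$. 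In particular $V$ is invariant under this multiplication, so every $v\in V$ satisfies $1_Xv\in V\subseteq C[0,1]$.

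The main obstacle, and last step, is to deduce $V=\{0\}$ from this invariance, which is precisely where the pathology of $X$ enters. Suppose toward contradiction that some $v\in V$ is nonzero, and let $U=\{t:v(t)\neq 0\}$, a nonempty open set. On $U$, the identity $1_X=(1_Xv)/v$ exhibits $1_X$ as almost everywhere equal to a continuous $\{0,1\}$-valued function. Restricting to any component interval $I\subseteq U$ and using connectedness, this continuous function must be constant, yielding either $m(I\cap X)=0$ or $m(I\cap X^c)=0$, and contradicting \eqref{eq:phifaithful}. Hence $S_{12}=0$. Finally, if $\phi$ were CP-approximable, Lemma~\ref{lem:reformlem} would produce a block-form net whose $(1,2)$-entries all vanish yet must converge pointwise to the identity on $C[0,1]$, which is absurd.
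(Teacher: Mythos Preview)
Your proof is correct and takes a genuinely different, considerably cleaner route than the paper's.

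The paper argues pointwise: fixing $\theta\in[0,1]$ and $\epsilon>0$, it builds auxiliary UCP maps $T_1,T_2$ on $C[0,1]$ (conditional-expectation-type maps adapted to a fine open cover on which the finite-dimensional ranges of $S_{11},S_{22}$ oscillate by at most~$\epsilon$), uses the $2\times 2$ positivity of $S$ to obtain $|S_{12}(f)|^2\leq T_i(S_{ii}(f))+\epsilon$, and then approximates $X$ by compact and open sets to show that local averages of $|S_{12}(f)|^2$ near $\theta$ are $O(\epsilon)$; the Lebesgue differentiation theorem finishes. This is entirely elementary but fairly intricate.

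Your argument replaces all of this by three clean moves: Kadison--Schwarz gives $|S_{12}(f)|^2\leq S_{ii}(|f|^2)$ directly; integrating over $X$ and $X^c$ and invoking $\phi$-preservation produces the two weighted $L^2$ contractions, whose extension to $L^2$ then \emph{commutes} with multiplication by $1_X$; finally, the finite-dimensional range $V\subseteq C[0,1]$ is therefore $1_X$-invariant, and the property~\eqref{eq:phifaithful} of $X$ forbids any nonzero continuous $v$ with $1_Xv$ continuous. The commutation step is the key conceptual gain: it converts the problem into a purely topological incompatibility between $1_X$ and continuous functions, bypassing the partition-of-unity and Lebesgue-differentiation machinery entirely. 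Both proofs ultimately exploit the same two ingredients (positivity of $S$ and the pathology of $X$), but yours isolates them more sharply and would generalise more readily to other base spaces or measures.
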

\begin{proof} Let $\theta\in [0,1]$ and $\epsilon>0.$ Choose an  open cover $I_1,...,I_N$ of $[0,1]$ such that for every $f\in \textrm{span}\{S_{11}(C[0,1]),S_{22}(C[0,1]) \}$ we have 
\begin{equation}
\max_{i=1,...,N}\sup_{x,y\in I_k}|f(x)-f(y)|<\V f\V\epsilon.\label{eq:bunchingf}
\end{equation}
Without loss of generality, suppose that $\theta\in I_1$ and there is an open interval $J$ such that
\begin{equation}
\theta\in J\subseteq I_1 \quad \textrm{and}\quad J\cap I_j=\emptyset \textrm{ for all }j\geq 2. \label{eq:Jdef}
\end{equation}
Moreover, assume that there are points $t_i\in I_i\setminus (\cup_{j\neq i}I_j)$ for $2\leq i\leq N.$
Let $\rho_1,\rho_2,...,\rho_N$ be a partition of unity subject to the cover $\{I_i\}_{i=1}^N.$
For $i=1,2$ define UCP maps $T_i:C[0,1]\rightarrow C[0,1]$ as 
\begin{equation*}
T_1(f)=\sum_{i=2}^N f(t_i)\rho_i + \Big( \frac{\int_X f\rho_1\textrm{ } dm}{\int_X \rho_1\textrm{ } dm}\Big)\rho_1\textrm{ } ,\quad T_2(f)=\sum_{i=2}^N f(t_i)\rho_i + \Big( \frac{\int_{X^c} f\rho_1\textrm{ } dm}{\int_{X^c} \rho_1\textrm{ } dm}\Big)\rho_1.
\end{equation*}
By (\ref{eq:bunchingf}), we have
\begin{equation}
\V T_i(S_{ii}(f))-S_{ii}(f)   \V\leq \epsilon \V f\V \quad \textrm{ for all  } f\in C[0,1]\textrm{ and }i=1,2. \label{eq:1}
\end{equation}
Let $0\leq f\leq 1$ be in $C[0,1].$  Then $\left[ \begin{array}{ll}f & f \\
f & 1 \\
\end{array} \right]\geq0.$ Since $S_{22}$ is unital,  by (\ref{eq:1}), we have
\begin{equation*}
T_1(S_{11}(f))-|S_{12}(f)|^2 \geq det\left[ \begin{array}{ll} S_{11}(f) & S_{12}(f) \\
S_{12}(f)^* & 1 \\
\end{array} \right] -\epsilon\geq -\epsilon
\end{equation*}
By a similar calculation with $T_2$, we have
\begin{equation}
|S_{12}(f)|^2\leq T_i(S_{ii}(f))+\epsilon \quad \textrm{for }i=1,2. \label{eq:2}
\end{equation}
Now, let $\theta\in I\subseteq J$ (as defined in (\ref{eq:Jdef})) be an  interval and set $\phi_I=\frac{1}{m(I)}\int _I f\textrm{ }dm.$  Since $\rho_i=0$ on $I$ for every $i>1$ and $\int_X S_{11}(f)dm=\int_X f dm$, we have
\begin{align}
\phi_I(T_1(S_{11}(f))) & =\phi_I\Big( \frac{\int_XS_{11}(f)\rho_1\textrm{ }dm}{\int_X\rho_1\textrm{ }dm}  \rho_1 \Big) \notag \\
&\leq \phi_I\Big( \frac{\int_XS_{11}(f)\textrm{ }dm}{\int_X\rho_1\textrm{ }dm}  \rho_1 \Big) \notag \\
&= \phi_I\Big( \frac{\int_Xf\textrm{ }dm}{\int_X\rho_1\textrm{ }dm}  \rho_1 \Big). \label{al:3} 
\end{align}
Let $K\subseteq [0,1]$ be compact and $\mathcal{O}\subseteq [0,1]$ be open  such that $K\subseteq X\subseteq \mathcal{O}$ and 
\begin{equation}
m(\mathcal{O}\setminus K)<\epsilon\min\Big\{ \int_X \rho_1 dm,\int_{X^c} \rho_1 dm  \Big\}. \label{eq:epbound}
\end{equation}
Let $0\leq\gamma\leq1$ be a continuous function such that $\gamma|_K=1$ and $\gamma|_{\mathcal{O}^c}=0.$
By (\ref{al:3}) and (\ref{eq:epbound}), it follows that
\begin{equation*}
\phi_I(T_1(S_{11}((1-\gamma) f)))\leq \epsilon.
\end{equation*}
By a similar computation as in (\ref{al:3}), we obtain
\begin{equation*}
\phi_I(T_2(S_{22}(\gamma f)))\leq \epsilon.
\end{equation*}
We now combine the previous inequalities with (\ref{eq:2}) to obtain
\begin{align*}
\frac{1}{m(I)}\int_I |S_{12}(f)|^2\textrm{ }dm&\leq \Big( \Big(\frac{1}{m(I)}\int_I |S_{12}(\gamma f)|^2\textrm{ }dm\Big)^{1/2}+\Big(\frac{1}{m(I)}\int_I |S_{12}((1-\gamma) f)|^2\textrm{ }dm\Big)^{1/2}\Big)^2\\
&\leq (\sqrt{2\epsilon}+\sqrt{2\epsilon})^2.
\end{align*}
Since $\epsilon>0$ was arbitrary, it follows that
\begin{equation*}
|S_{12}(f)|^2(\theta)=\lim_{\theta\in I;\textrm{ }m(I)\rightarrow 0}\frac{1}{m(I)}\int_I |S_{12}(f)|^2 \textrm{ }dm=0.
\end{equation*}
Since $\theta$ was arbitrary it follows that $S_{12}=0.$ That $\phi$ is not CP-approximable now follows from Lemma \ref{lem:reformlem}.
\end{proof}
\section{GNS-faithful states} \label{sec:GNSfaith}
In this section we show that all states of the following type on homogeneous C*-algebras are CP-approximable:
\begin{defn} \label{defn:GNSfaith}
Let B be a unital C*-algebra and $\phi$ a state on B with associated GNS representation and cyclic vector $(\pi_\phi,\xi_\phi).$  We say that $\phi$ is \textbf{GNS-faithful} if the vector state $\la \textrm{ }(\cdot)\textrm{ }\xi_\phi, \xi_\phi\ra$ is faithful on $\pi_\phi(B)''.$ 
\end{defn}
For a function $g:X\rightarrow Y$ between sets,  we will adopt the common practice from measure theory by writing $\{ \phi(g)\}$ as shorthand notation for $\{x\in X : \phi(g(x)) \}$ where $\phi$ is some formula (for example $\phi(x)=\textrm{``}x\leq 10\textrm{"}$ or ``$x$ is invertible."). 

The following is easy to verify from the Riesz representation theorem, Radon-Nikodym theorem and basic facts about the GNS construction.
\begin{lemma} \label{lem:statechange} Let $X$ be a compact Hausdorff space, $n\in\mathbb{N}$ and $\phi$ a state on $A=M_n\otimes C(X).$ Let $\tau$ denote the tracial state on $M_n.$  There is a regular Borel probability measure $\mu$ on $X$ and a bounded measurable function $g:X\rightarrow M_n^+$ such that 
\begin{equation}
\phi(f)=\int_X \tau(g(x)f(x))d\mu \quad \textrm{for }f\in A.\label{eq:defofphi}
\end{equation}
Moreover, if $\phi$ is GNS-faithful, then the natural extension of $\phi$ to $M_n\otimes L^\infty(X,\mu)$ is faithful. In particular,
\begin{equation}
\mu\Big(\{g \textrm{ is invertible }\}\Big)=1. \label{eq:biginvert}
\end{equation} 
\end{lemma}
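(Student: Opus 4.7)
The overall plan is to produce $\mu$ and $g$ by a Riesz/Radon--Nikodym argument, then to compare the GNS triples of $\phi$ and its natural extension $\widetilde{\phi}$ to upgrade GNS-faithfulness to faithfulness of $\widetilde{\phi}$ on $B=M_n\otimes L^\infty(X,\mu)$, and finally to deduce invertibility of $g$ a.e.\ via Borel functional calculus. For the existence of $\mu$ and $g$, first restrict $\phi$ to $1_{M_n}\otimes C(X)\cong C(X)$ and apply Riesz to get a regular Borel probability measure $\mu$ with $\phi(1\otimes f)=\int f\,d\mu$. For each $(i,j)$ the functional $f\mapsto\phi(e_{ij}\otimes f)$ gives a regular complex Borel measure $\mu_{ij}$, and applying $\phi$ to positive elements $(c_i\overline{c_j})\otimes f$ for $c\in\mathbb{C}^n$ and $f\in C(X)^+$ shows that $(\mu_{ij}(E))_{i,j}$ is a positive matrix for every Borel $E$. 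Since $\mu=\sum_i\mu_{ii}$, Cauchy--Schwarz gives $|\mu_{ij}(E)|\le\mu(E)$ and hence $\mu_{ij}\ll\mu$, so Radon--Nikodym produces a bounded measurable $g:X\to M_n^+$ (with the appropriate factor of $n$) satisfying $\phi(f)=\int\tau(g(x)f(x))\,d\mu$. Pointwise positivity of $g$ follows by testing against $cc^*\otimes f$ for $c$ in a countable dense subset of $\mathbb{C}^n$ and Lebesgue differentiation, and $\mathrm{Tr}(g(x))=n$ a.e.\ (from $\mu=\sum_i\mu_{ii}$) provides the essential bound $\|g(x)\|\le n$.

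The key step is showing that when $\phi$ is GNS-faithful, the normal extension $\widetilde{\phi}(h)=\int\tau(g(x)h(x))\,d\mu$ is faithful on $B$. For this I would form the GNS triple $(\pi_{\widetilde{\phi}},K,\eta)$ of the normal state $\widetilde{\phi}$. Via Lusin's theorem and dominated convergence, $C(X)$ is $\sigma$-weakly dense in $L^\infty(X,\mu)$, so $A$ is $\sigma$-weakly dense in $B$; Kaplansky density yields bounded $\sigma$-strong approximants $a_\alpha\in A$ of any $b\in B$, and the identity $\|\pi_{\widetilde{\phi}}(a_\alpha)\eta-\pi_{\widetilde{\phi}}(b)\eta\|^2=\widetilde{\phi}((a_\alpha-b)^*(a_\alpha-b))$ (together with normality of $\widetilde{\phi}$) forces this to go to zero, so $\eta$ is cyclic for $\pi_{\widetilde{\phi}}(A)$. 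Uniqueness of GNS then unitarily identifies $(\pi_{\widetilde{\phi}}|_A,K,\eta)$ with $(\pi_\phi,H_\phi,\xi_\phi)$ and $\pi_{\widetilde{\phi}}(B)=\pi_{\widetilde{\phi}}(A)''$ with $\pi_\phi(A)''$, and the hypothesis makes $\eta$ separating for $\pi_{\widetilde{\phi}}(B)$, so $\widetilde{\phi}(h^*h)=\|\pi_{\widetilde{\phi}}(h)\eta\|^2=0$ forces $\pi_{\widetilde{\phi}}(h)=0$. To deduce $h=0$, note that $\ker\pi_{\widetilde{\phi}}$ is a $\sigma$-weakly closed two-sided ideal of $B$, hence of the form $M_n\otimes 1_Y L^\infty$ for some measurable $Y\subseteq X$; but $\widetilde{\phi}(1\otimes 1_Y)=\mu(Y)$, which vanishes since $1\otimes 1_Y$ lies in the kernel, so $Y$ is $\mu$-null and the kernel is trivial.

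The ``in particular'' statement then follows by using Borel functional calculus pointwise to build the measurable projection $p(x)$ onto $\ker g(x)$: one has $p\in B^+$ with $\widetilde{\phi}(p)=\int\tau(g(x)p(x))\,d\mu=0$ by construction, so faithfulness of $\widetilde{\phi}$ forces $p=0$ in $B$, i.e., $\mu(\{g\text{ is not invertible}\})=0$. I anticipate the main obstacle to be the second paragraph, specifically the interplay between Kaplansky density and normality of $\widetilde{\phi}$ used to recover cyclicity of $\eta$ over the subalgebra $\pi_{\widetilde{\phi}}(A)$ and the identification of $\ker\pi_{\widetilde{\phi}}$ with an ideal controlled by $\mu$-null sets, since this is precisely where the hypothesis on $\pi_\phi(A)''$ must be leveraged to draw conclusions about the apparently larger von Neumann algebra $M_n\otimes L^\infty(X,\mu)$.
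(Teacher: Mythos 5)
Your argument is correct and uses exactly the ingredients the paper invokes for this lemma (Riesz representation, a matrix-valued Radon--Nikodym argument, and standard GNS/von Neumann facts: normality of $\widetilde{\phi}$, Kaplansky density, uniqueness of the GNS triple, and the structure of $\sigma$-weakly closed ideals of $M_n\otimes L^\infty(X,\mu)$), so it fills in the details of the proof the paper leaves as ``easy to verify'' in essentially the intended way. One cosmetic point: on a general compact Hausdorff $X$ there is no Lebesgue differentiation theorem, but none is needed --- for each $c$ in a countable dense subset of $\mathbb{C}^n$ the measure $\sum_{i,j}c_i\overline{c_j}\mu_{ij}$ is positive, hence its Radon--Nikodym density $\sum_{i,j}c_i\overline{c_j}h_{ij}$ is nonnegative $\mu$-a.e., which already yields $g(x)\in M_n^+$ for $\mu$-a.e.\ $x$.
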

\begin{theorem} \label{thm:GNSfaithful} Let $X$ be a compact Hausdorff space, $n\in\mathbb{N}$ and $\phi$ be a GNS-faithful state on $A=M_n\otimes C(X).$ Then $\phi$ is CP-approximable.
\end{theorem}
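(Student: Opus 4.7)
The plan is to use Lemma~\ref{lem:statechange} to represent $\phi$ as $\phi(f) = \int_X \tau(g f)\, d\mu$, with $g\colon X \to M_n^+$ bounded measurable and invertible on a set of full $\mu$-measure, and then, for each finite $F \subseteq A$ and each $\epsilon > 0$, to produce a finite-rank UCP map $T$ with $\phi \circ T = \phi$ and $\|T(f) - f\| < \epsilon$ for every $f \in F$. The map $T$ will be built from a continuous partition of unity $\{\rho_i\}_{i=1}^N$ subordinate to a finite open cover $\{U_i\}$ of $X$ chosen to be adapted to both $F$ and $g$.

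The key construction is as follows. For each $i$ set $G_{\rho_i} = \int \rho_i g\, d\mu \in M_n^+$; because $g > 0$ $\mu$-a.e., $G_{\rho_i}$ is invertible exactly when $\int \rho_i\, d\mu > 0$. For those indices I take the UCP map $\alpha_i\colon A \to M_n$ defined by
\[
\alpha_i(f) = G_{\rho_i}^{-1/2}\Big( \int \rho_i\, g^{1/2} f g^{1/2}\, d\mu\Big) G_{\rho_i}^{-1/2},
\]
whose complete positivity is apparent from the Kraus-type presentation $\alpha_i(f) = \int V_i^* f V_i\, d\mu$ with $V_i(x) = \rho_i(x)^{1/2} g(x)^{1/2} G_{\rho_i}^{-1/2}$, and $\alpha_i(1) = 1$ gives unitality. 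For the remaining ``degenerate'' indices (those with $\rho_i = 0$ $\mu$-a.e.) set $\alpha_i(f) = f(x_i)$ for a chosen $x_i \in \operatorname{supp}(\rho_i)$. Put $T(f) = \sum_i \rho_i\, \alpha_i(f)$. Each $\alpha_i$ takes values in $M_n$, so $T$ is finite-rank and visibly UCP, and a direct calculation using the cyclicity of $\tau$ delivers exact preservation
\[
\phi(T(f)) = \sum_i \tau\!\left(G_{\rho_i}\, \alpha_i(f)\right) = \sum_i \int \rho_i\, \tau(gf)\, d\mu = \phi(f),
\]
with degenerate indices contributing $0$ to both sides.

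For the pointwise estimate $T(f)(x)-f(x)=\sum_i \rho_i(x)(\alpha_i(f)-f(x))$, I fix $x_i \in \operatorname{supp}(\rho_i)$ and split
\[
\alpha_i(f) - f(x_i) = G_{\rho_i}^{-1/2}\!\int \rho_i g^{1/2}(f - f(x_i)) g^{1/2}\, d\mu\, G_{\rho_i}^{-1/2} \;+\; (P_i - \operatorname{id})(f(x_i)),
\]
where $P_i(y) = G_{\rho_i}^{-1/2}\!\int \rho_i g^{1/2} y g^{1/2}\, d\mu\, G_{\rho_i}^{-1/2}$ is UCP on $M_n$. The first summand has norm at most $\sup_{U_i} \|f - f(x_i)\|$ via the operator estimate $\pm(f-f(x_i)) \leq \epsilon \cdot 1_A$ on $U_i$ (after splitting into self-adjoint parts) combined with the positivity of $g$, and is controlled by uniform continuity of $f \in F$ on fine enough covers. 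The second summand vanishes whenever $g$ is $\mu$-a.e.\ constant on $\operatorname{supp}(\rho_i)$.

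The main technical hurdle is controlling $\|P_i - \operatorname{id}\|$ when $g$ is only measurable. The plan is to invoke Lusin's theorem inside $\operatorname{supp}(\mu)$ to produce a compact $K \subseteq \operatorname{supp}(\mu)$ with $g|_K$ continuous and bounded below by some $\delta > 0$, and $\mu(\operatorname{supp}(\mu)\setminus K)$ as small as desired, and then to select the cover $\{U_i\}$ fine enough that $f \in F$ varies by less than $\epsilon$ on each $U_i$ and $g|_{U_i \cap K}$ is $\epsilon$-close to a single invertible matrix $g_i$ whenever $U_i$ meets $K$. Shrinking the Lusin discrepancy further---below the positive minimum of $\mu(U_i)$ taken over those $U_i$ meeting $\operatorname{supp}(\mu)$---ensures that every $U_i$ with $\int \rho_i\, d\mu > 0$ actually meets $K$; the remaining degenerate $U_i$ lie in $X\setminus \operatorname{supp}(\mu)$ and are handled by evaluation as above. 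Under these arrangements, the lower bound $G_{\rho_i} \geq \delta\int\rho_i\, d\mu \cdot 1$ together with the smallness of $\mu(U_i\setminus K)$ relative to $\mu(U_i\cap K)$ forces $\|P_i - \operatorname{id}\| = O(\epsilon)$ on the ``good'' indices, yielding the required pointwise bound $\|T(f) - f\|_\infty = O(\epsilon)$ for $f \in F$.
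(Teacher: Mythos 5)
Your map $T$ is exactly the paper's map (\ref{eq:defnofT}): your $\alpha_i(f)=G_{\rho_i}^{-1/2}\E(\rho_i g^{1/2}fg^{1/2})G_{\rho_i}^{-1/2}$ is the bracketed term there, and your unitality and $\phi$-preservation computations agree with the paper's. The divergence, and the gap, lies in how you control the indices on which $g$ is not under control. Your plan is: take a Lusin compact $K$ with $g|_K$ continuous and bounded below by $\delta$, choose the open cover $\{U_i\}$ fine with respect to $\mathcal{F}$ and to $g|_K$, and then ``shrink the Lusin discrepancy below $\min_i\mu(U_i)$'' so that every non-degenerate $U_i$ meets $K$. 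This has two problems. First, it is circular: the cover was constructed from $K$ (so that $g|_{U_i\cap K}$ is nearly constant), so you cannot afterwards replace $K$ by a larger Lusin set without losing the very property the cover was chosen for, and for the fixed $K$ the quantities $\mu(U_i\setminus K)$ are already determined --- there is nothing left to shrink. Second, even granting that each non-degenerate $U_i$ meets $K$, that is not enough: the estimate on $\Vert P_i-\mathrm{id}\Vert$ really requires that the ``good'' mass $\int_{U_i\cap K}\rho_i g\,d\mu$ (where $g\geq\delta$ and is nearly constant) dominate the ``bad'' mass $\int_{U_i\setminus K}\rho_i g\,d\mu$, for every index separately. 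An open $U_i$ can meet $K$ with $\mu(U_i\cap K)=0$, or merely much smaller than $\mu(U_i\setminus K)$, in which case $G_{\rho_i}$ is carried by the uncontrolled part and $\alpha_i(f)$ need not be anywhere near $f(x)$ for $x\in\operatorname{supp}\rho_i$; since $\Vert T(f)-f\Vert$ is a sup-norm error, a single such index ruins the estimate no matter how small its measure.

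This per-index domination is precisely what the paper's proof is engineered to produce, and it is the nontrivial part of the theorem: the Borel partition is fixed first (pieces $B$ on which the functions in $\mathcal{F}$ are nearly constant, refined according to the value of $g$), the whole bad set $B\cap\{g\geq\gamma\}^c$ is lumped into the one designated piece $Y_{1,B}$ of (\ref{eq:defnofY1B}), which is guaranteed to also carry mass at least $r>0$ on which $g\in Z_{i_B,\delta}$ (see (\ref{eq:goodmeasuresets})), with $\gamma$ chosen only after $r$ is known (see (\ref{eq:boundingbadset})), and the inner compact/outer open approximations are then taken with discrepancy small relative to the already-fixed $\mu(Y_{i,B})$ (see (\ref{eq:approxsetwithcptopen})). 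Your sketch correctly identifies $\Vert P_i-\mathrm{id}\Vert$ as the technical hurdle, but the proposed Lusin-based resolution does not deliver it; repairing it would force you to reorganize the choices in essentially this order (fix a partition adapted to $\mathcal{F}$ and to the values of $g$, attach the bad set to pieces already known to carry a definite amount of good mass, and only then pass to compacts, open sets and a partition of unity), which is the substance of the paper's argument rather than a routine detail.
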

\begin{proof} Let $\mathcal{F}\subseteq C(X)$ be a finite subset and $\epsilon>0.$  We construct a finite rank, UCP, $\phi$-preserving map $T:A\rightarrow A$ such that 
\begin{equation}
\max_{f\in \mathcal{F}}\max_{b\in M_n, \V b\V\leq1}\V T(b\otimes f)-b\otimes f \V\leq \epsilon. \label{eq:whattoprove}
\end{equation}
To this end, let $\mathcal{B}$ be a finite set of disjoint Borel subsets of $X$ with positive measure and $\cup \mathcal{B}=X$  such that 
\begin{equation}
\max_{f\in\mathcal{F}}\max_{B\in\mathcal{B}}\sup_{x,y\in B}|f(x)-f(y)|\leq \epsilon/8.\label{eq:fconstant}
\end{equation}
Let $\mu, \tau$ and $g$ be as in Lemma \ref{lem:statechange}. Set $\V g\V:=\sup_{x\in X}\V g(x) \V_{M_n}.$  For each $\delta>0$ define
\begin{equation*}
Z_\delta=\{a\in M_n^+: \delta\leq  a \leq \V g \V   \}.
\end{equation*}
For a subset $W\subseteq M_n$, we let $\overline{\textrm{conv}}(W)$ denote the closed convex hull of $W.$  Partition $Z_\delta$ into finitely many disjoint Borel subsets $Z_{1,\delta},...,Z_{N_\delta, \delta}$ such that
\begin{equation}
\max_{i=1,..,N_\delta}\sup\{\V a^{\frac{1}{2}}b^{-\frac{1}{2}}-1 \V: a,b\in \overline{\textrm{conv}}(Z_{i,\delta}) \}<\frac{\epsilon}{8}. \label{eq:boundsonsquares}
\end{equation}
By (\ref{eq:biginvert}) we may  suppose that $\delta$ has been chosen small enough so that for every $B\in \mathcal{B}$ we have $\mu(B\cap\{ g\geq \delta\})>0.$  Then for each $B\in\mathcal{B}$ there is an index $1\leq i_B\leq N_\delta$ such that
\begin{equation}
0<r:= \min_{B\in\mathcal{B}} \mu(B\cap \{g \in Z_{i_B, \delta}\}). \label{eq:goodmeasuresets}
\end{equation}
Let $F:(0,1)\rightarrow (0,1)$ be a function that satisfies the following sentence:
\begin{equation} 
(\forall a,b\in M_n^+)(\textrm{sp}(a),\textrm{sp}(b) \subseteq [s,\V g\V])(\V a-b\V\leq F(s)\rightarrow \V a^{-\frac{1}{2}}-b^{-\frac{1}{2}} \V\leq \epsilon/(\V g\V8)) \label{eq:invertroots}
\end{equation}
By (\ref{eq:biginvert}) there is  a $\gamma<\delta$ such that
\begin{equation}
\mu(\{ g\geq \gamma \}^c)<\frac{1}{2}\min\{F((\delta r)/2),\frac{\epsilon\delta r}{8\V g\V} \}.\label{eq:boundingbadset}
\end{equation}
 Now obtain a finite partition of $Z_\gamma$; $Z_{1,\gamma}, ...,Z_{N_\gamma,\gamma}$ that satisfies (\ref{eq:boundsonsquares}) and such that $Z_{i,\delta}=Z_{i,\gamma}$ for all $1\leq i\leq N_\delta.$
For each $B\in\mathcal{B}$ define the set
\begin{equation}
Y_{1,B}:= B\cap (\{g\geq \gamma\}^c\cup \{ g\in Z_{i_B,\delta} \}) \label{eq:defnofY1B}
\end{equation}
Then let $Y_{2,B},...,Y_{N_B,B}$ be an enumeration of the sets
\begin{equation}
\{ B\cap \{g\in Z_{i,\gamma}\} : \mu(B\cap \{g\in Z_{i,\gamma}\})>0 \textrm{ and }i\neq i_B\}.
\end{equation}
For each $B\in\mathcal{B}$ and $1\leq i\leq N_B$ obtain a compact  set $K_{i,B}\subseteq Y_{i,B}$ and an open set $O_{i,B}\supseteq Y_{i,B}$ such that 
\begin{equation}
\mu(O_{i,B}\setminus K_{i,B})<\frac{1}{2} \min\Big\{F((\gamma \mu(Y_{i,B}))/2),\frac{\epsilon\gamma \mu(Y_{i,B})}{8\V g\V} \Big\}. \label{eq:approxsetwithcptopen}
\end{equation}
Then, 
\begin{equation}
K_{i,B}\cap K_{i',B'}=\emptyset \quad\textrm{ if }\quad(i,B)\neq (i',B') \quad \textrm{and }\quad\bigcup_{B\in \mathcal{B}}\bigcup_{i=1}^{N_B} O_{i,B}=X. \label{eq:disjointpluscover}
\end{equation}
Let $\{ \rho_{i,B}: B\in\mathcal{B}, 1\leq i\leq N_B\}\subseteq C(X)\subseteq M_n\otimes C(X)$ be a partition of unity subject to the open cover $\{ O_{i,B}\}$.  Note that 
\begin{equation}
\rho_{i,B}(x)=1\quad \textrm{ for every }x\in K_{i,B}.
\end{equation}
Denote by  $\mathbb{E}:M_n\otimes L^\infty(X,\mu)\rightarrow M_n$ the conditional expectation $id_{M_n}\otimes\int (\cdot)d\mu.$  Note that $\mathbb{E}$ satisfies the following equations:
\begin{equation}
\tau(a\mathbb{E}(h))=\int \tau(ah(x))d\mu(x) \quad \textrm{for all }a\in M_n\quad \textrm{and }h\in M_n\otimes L^\infty(X,\mu). \label{eq:defofE}
\end{equation}
Before constructing our map $T$, we first need a few estimates.  Let $B\in \mathcal{B}.$ Due to the difference in definitions of $Y_{1,B}$ and $Y_{i,B}$ for $i>1$, we first isolate the case $i=1.$   We have
\begin{align*}
\E (\rho_{1,B} g)&=\E \Big(1_{K_{1,B}\cap \{ g\in Z_{i_B,\delta}\}} \rho_{1,B}g \Big)+\E\Big(1_{({K_{1,B}\cap \{ g\in Z_{i_B,\delta}\}})^c}\rho_{1,B}g\Big)\\
&\geq \delta(r-\mu(O_{1,B}\setminus K_{1,B})-\mu(\{ g\geq\gamma\}^c))+\E\Big(1_{({K_{1,B}\cap \{ g\in Z_{i_B,\delta}\}})^c}\rho_{1,B}g\Big)\\
&\geq \frac{\delta r}{2}+\E\Big(1_{({K_{1,B}\cap \{ g\in Z_{i_B,\delta}\}})^c}\rho_{1,B}g\Big)\\
\end{align*} 
From this we deduce that
\begin{equation}
\textrm{sp}(\E(\rho_{1,B}g))\subseteq [\frac{\delta r}{2}, \V g\V]\quad \textrm{ and }\quad \V \E(\rho_{1,B}g)^{-\frac{1}{2}}\V\leq \Big(\frac{2}{\delta r}\Big)^{\frac{1}{2}}. \label{eq:fixingspect}
\end{equation} 
Now, set $a_{1,B}:=\E \Big(1_{K_{1,B}\cap \{ g\in Z_{i_B,\delta}\}} \rho_{1,B}g \Big).$  
By (\ref{eq:boundingbadset}) and (\ref{eq:approxsetwithcptopen}) it follows that 
\begin{equation*}
\V \E(\rho_{1,B}g)-a_{1,B}\V=\V \E\Big(1_{({K_{1,B}\cap \{ g\in Z_{i_B,\delta}\}})^c}\rho_{1,B}g\Big) \V\leq F((\delta r)/2).
\end{equation*}
From which, in conjunction with (\ref{eq:invertroots}) and (\ref{eq:fixingspect}), we deduce that
\begin{equation}
\V (\E(\rho_{1,B}g))^{-\frac{1}{2}}-a_{1,B}^{-\frac{1}{2}}\V<\epsilon/(8\V g \V). \label{eq:aiBclose}
\end{equation} 
Notice that  $\mu(K_{1,B}\cap \{ g\in Y_{i_B,\delta}\})^{-1}a_{1,B}\in \overline{\textrm{conv}}(Z_{i_B,\delta}).$ So by (\ref{eq:boundsonsquares}) and (\ref{eq:aiBclose}) we obtain
\begin{equation}
\sup_{y\in Z_{i_B,\delta}}\Big\V g(y)^{\frac{1}{2}}\E(\rho_{1,B}g)^{-\frac{1}{2}}-   \mu(K_{1,B}\cap \{ g\in Z_{i_B,\delta}\})^{-\frac{1}{2}} \Big\V <\frac{\epsilon}{4}\mu(K_{1,B}\cap \{ g\in Z_{i_B,\delta}\})^{-\frac{1}{2}}. \label{eq:gandEinv}
\end{equation}
We isolated the case $i=1$ above, because we had to take into account the sets $B\cap \{ g\geq \gamma\}^c.$  When $i>1$, we no longer concern ourselves with these sets. Therefore, by essentially the same (but slightly simpler) estimates as above and using line (\ref{eq:approxsetwithcptopen}) we obtain
\begin{equation}
\max_{B\in\mathcal{B}}\max_{i=2,..,N_B}\sup_{y\in g^{-1}(Y_{i,B})}\Big\V g(y)^{\frac{1}{2}}\E(\rho_{i,B})^{-\frac{1}{2}}-   \mu(K_{i,B})^{-\frac{1}{2}} \Big\V <\frac{\epsilon}{4}\mu(K_{i,B})^{-\frac{1}{2}},
\end{equation}
and
\begin{equation}
\V \E(\rho_{i,B}g)^{-\frac{1}{2}}\V\leq \Big(\frac{2}{\gamma \mu(Y_{i,B})}\Big)^{\frac{1}{2}}.
\end{equation}
We now define the map $T:A\rightarrow A$ as
\begin{equation}
T(h)=\sum_{B\in\mathcal{B}}\sum_{i=1}^{N_B} \Big[\E(\rho_{i,B} g)^{-\frac{1}{2}}\E(\rho_{i,B} g^{\frac{1}{2}}hg^{\frac{1}{2}})\E(\rho_{i,B} g)^{-\frac{1}{2}}\Big]\otimes \rho_{i,B}.\label{eq:defnofT}
\end{equation}
  It is clear that $T$ is UCP and finite rank.  We now show that  $T$ preserves $\phi.$  To this end, fix a pair $(i,B)$.  Then by (\ref{eq:defofphi}) and (\ref{eq:defofE}) we have
\begin{align*}
&\phi\Big(\Big[\E(\rho_{i,B} g)^{-\frac{1}{2}}\E(\rho_{i,B} g^{\frac{1}{2}}hg^{\frac{1}{2}})\E(\rho_{i,B} g)^{-\frac{1}{2}}\Big]\otimes \rho_{i,B}\Big)\\
=& \int \tau\Big( \E(\rho_{i,B} g)^{-\frac{1}{2}}\E(\rho_{i,B} g^{\frac{1}{2}}hg^{\frac{1}{2}})\E(\rho_{i,B} g)^{-\frac{1}{2}}\rho_{i,B}(x)g(x)\Big)d\mu(x)\\
=& \tau\Big(\E(\rho_{i,B} g)^{-\frac{1}{2}}\E(\rho_{i,B} g^{\frac{1}{2}}hg^{\frac{1}{2}})\E(\rho_{i,B} g)^{-\frac{1}{2}}\E(\rho_{i,B}g)         \Big)\\
=&\tau\Big(   \E(\rho_{i,B} g^{\frac{1}{2}}hg^{\frac{1}{2}})   \Big)\\
=& \int\tau\Big( \rho_{i,B}(x)g(x)h(x)\Big)d\mu(x).
\end{align*}
Since the $\rho_{i,B}$ form a partition of unity, it follows that $\phi\circ T=\phi.$

We now show that $T$ approximately fixes matrices. Let $b\in M_n$ of norm 1 and $a\in M_n$ with $\tau(|a|)\leq1$. Again, we will isolate the case $i=1$ for each $B\in\mathcal{B}.$ For convenience we denote $\mathcal{S}=K_{1,B}\cap \{ g\in Y_{i_B,\delta}\}$ and  $s:=\mu(\mathcal{S}).$ Then,
\begin{align*}
&\Big|\tau\Big(a\E(\rho_{1,B} g)^{-\frac{1}{2}}\E(\rho_{1,B} g^{\frac{1}{2}}bg^{\frac{1}{2}})\E(\rho_{1,B} g)^{-\frac{1}{2}}\Big) -\tau(ab) \Big|\\
&=\Big|\int_{O_{1,B}}\tau\Big(a\E(\rho_{1,B} g)^{-\frac{1}{2}}\rho_{1,B}(y) g(y)^{\frac{1}{2}}bg(y)^{\frac{1}{2}}\E(\rho_{1,B} g)^{-\frac{1}{2}}  \Big)d\mu(y)-\tau(ab)\Big|\\
&\leq \Big| \int_{\mathcal{S}}\tau\Big(a  \E(\rho_{1,B} g)^{-\frac{1}{2}}g(y)^{\frac{1}{2}}bg(y)^{\frac{1}{2}}\E(\rho_{1,B} g)^{-\frac{1}{2}}  \Big)    -\tau(ab)\Big|+\mu(O_{1,B}\setminus\mathcal{S})\V g\V\V\E(\rho g)^{-\frac{1}{2}}  \V^2\\
&\leq \Big| \int_{\mathcal{S}}\tau\Big(a  \E(\rho_{1,B} g)^{-\frac{1}{2}}g(y)^{\frac{1}{2}}bg(y)^{\frac{1}{2}}\E(\rho_{1,B} g)^{-\frac{1}{2}}  \Big)    -\tau(ab)\Big|+\frac{\epsilon}{4} \quad (\textrm{by }(\ref{eq:boundingbadset} ,\ref{eq:approxsetwithcptopen}, \ref{eq:fixingspect}))\\
&\leq \Big| s^{-1}\int_{\mathcal{S}}\tau(ab)-\tau(ab)\Big| +2s\sup_{y\in \mathcal{S}}\V g(y)^{\frac{1}{2}}\E(\rho_{1,B}g)^{-\frac{1}{2}}\V\V g(y)^{\frac{1}{2}}\E(\rho_{1,B}g)^{-\frac{1}{2}}-s^{-\frac{1}{2}} \V+\frac{\epsilon}{4}\\
&\leq \Big|s^{-1}\int_{\mathcal{S}}\tau(ab)d\mu-\tau(ab)\Big|+2s(s^{-\frac{1}{2}}(1+\frac{\epsilon}{4})s^{-\frac{1}{2}}\frac{\epsilon}{4})+\frac{\epsilon}{4} \quad (\textrm{by } (\ref{eq:gandEinv}))\\
&\leq \frac{7\epsilon}{8}.
\end{align*}
Once again, by very similar (but slightly simpler) estimates we obtain
\begin{equation}
\max_{B\in\mathcal{B}}\max_{i=2,...,N_B}\Big|\tau\Big( a\E(\rho_{i,B} g)^{-\frac{1}{2}}\E(\rho_{i,B} g^{\frac{1}{2}}bg^{\frac{1}{2}})\E(\rho_{i,B} g)^{-\frac{1}{2}}\Big) -\tau(ab)\Big|\leq \frac{7\epsilon}{8}.
\end{equation}
Since $a$ and $b$ were arbitrary, we obtain the following:
\begin{equation*}
\max_{B\in\mathcal{B}}\max_{i=1,...,N_B}\sup_{b\in M_n, \V b\V=1}\Big\V  \E(\rho_{i,B} g)^{-\frac{1}{2}}\E(\rho_{i,B} g^{\frac{1}{2}}bg^{\frac{1}{2}})\E(\rho_{i,B} g)^{-\frac{1}{2}} -b\Big\V_{M_n}\leq\frac{7\epsilon}{8}.
\end{equation*}
From this it follows that $\V T(b)-b\V\leq \frac{7\epsilon}{8}$ for every $b\in M_n$ of norm 1.

Finally, since all of the functions $f\in\mathcal{F}$ have variation at most $\epsilon/8$ on the sets $B\in\mathcal{B}$, very easy estimates show that
$\V T(b\otimes f)-b\otimes f \V\leq \epsilon$ for every $f\in\mathcal{F}$ and $b\in M_n$ of norm 1. This completes the proof.
\end{proof}
\begin{cor} Let $(A_i,\phi_i)_{i\in I}$ be a family of homogeneous C*-algebra such that each $\phi_i$ is GNS faithful.  Then the reduced free product of the family $(A_i,\phi_i)_{i\in I}$ has the CCAP.
\end{cor}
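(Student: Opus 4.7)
The plan is to apply the Ricard--Xu theorem (Theorem \ref{thm:ericandxu}) to the family $(A_i, \phi_i)_{i \in I}$, with CP-approximability of each $\phi_i$ supplied by Theorem \ref{thm:GNSfaithful}. The corollary should amount to verifying the hypotheses of Theorem \ref{thm:ericandxu} and then assembling.

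I would first verify the hypotheses of Theorem \ref{thm:ericandxu}. Nuclearity of each $A_i$ is automatic because every homogeneous C*-algebra is type I. For faithfulness of each $\pi_{\phi_i}$: GNS-faithfulness only controls the vector state on $\pi_{\phi_i}(A_i)''$, which does not by itself imply $\pi_{\phi_i}$ is injective, so I would replace $A_i$ by the quotient $A_i/\ker\pi_{\phi_i}$---still a unital homogeneous C*-algebra of the same degree with induced GNS-faithful state---noting that the reduced free product depends only on GNS data and hence is unchanged. CP-approximability of the (possibly replaced) $\phi_i$ is then precisely Theorem \ref{thm:GNSfaithful}, modulo one caveat addressed below.

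The caveat is that Theorem \ref{thm:GNSfaithful} is proved for the trivial-bundle case $A = M_n \otimes C(X)$, whereas a general unital $n$-homogeneous C*-algebra is only locally of this form. I would handle this by choosing a finite open cover of the spectrum that trivializes the $M_n$-bundle, running the construction of Theorem \ref{thm:GNSfaithful} in each local chart, and patching the resulting approximating UCP maps via a partition of unity on the spectrum. Because that construction is already localized through the partition of unity $\{\rho_{i,B}\}$, refining the cover to simultaneously trivialize the bundle should require only bookkeeping. With all hypotheses confirmed, Theorem \ref{thm:ericandxu} delivers the CCAP for the reduced free product. The substantive content is already in Theorem \ref{thm:GNSfaithful}; the remaining potential obstacles---non-faithfulness of $\pi_{\phi_i}$ and non-triviality of the bundles---should yield to standard quotient and localization arguments, so no new ideas are needed.
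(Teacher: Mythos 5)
Your proposal takes essentially the same approach as the paper, whose entire proof is the one-line observation that the corollary follows by combining Theorem \ref{thm:GNSfaithful} with Theorem \ref{thm:ericandxu}. The two caveats you raise --- passing to $A_i/\ker\pi_{\phi_i}$ so that the GNS representations are genuinely faithful as Theorem \ref{thm:ericandxu} requires, and extending Theorem \ref{thm:GNSfaithful} from $M_n\otimes C(X)$ to sections of a possibly nontrivial $M_n$-bundle --- are simply not addressed in the paper (it implicitly treats ``homogeneous'' as the trivial-bundle case), so your treatment of them is added care layered on the same argument rather than a different route.
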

\begin{proof} This follows from Theorem \ref{thm:GNSfaithful} and Theorem \ref{thm:ericandxu}
\end{proof}

\section{Further Remarks} \label{sec:further}
Proposition \ref{prop:example} raises the following natural question:
\begin{question} \label{ques:charCPap} Let A be a nuclear C*-algebra.  Is there a nice characterization of CP-approximable states?
\end{question}
In general, we feel that there will be no satisfying answer to Question \ref{ques:charCPap} because the set of CP-approximable states needn't be  convex or norm closed. Indeed consider $\phi$ from Proposition \ref{prop:example}.  It is easy to see that the states
\begin{equation*}
\phi_1=2\left[ \begin{array}{cc} 1_X \textrm{ }dm & 0\\ 0& 0 \\ \end{array} \right],\quad \phi_2=2\left[ \begin{array}{cc} 0  & 0\\ 0& 1_{X^c} \textrm{ }dm\\ \end{array} \right]
\end{equation*}
are both CP-approximable, but $\phi=1/2(\phi_1+\phi_2)$ is not CP-approximable.  Furthermore, Theorem \ref{thm:GNSfaithful} shows that $\phi$ is the norm limit of CP-approximable states. 

It is fairly straightforward to see that all states on a commutative C*-algebra or on a unital subalgebra of $K(H)^1$ (the unitization of the compact operators) are CP-approximable. Since $M_2\otimes C[0,1]$ has a non CP-approximable state, we feel the answer to the following should be yes:
\begin{question}
Let A be a separable nuclear C*-algebra such that every state is CP-approximable.  Is A commutative or isomorphic to a unital subalgebra of $K(H)^1$?
\end{question}
We note that the separability condition is necessary as it is straightforward to show (using ideas similar to those in Theorem \ref{thm:GNSfaithful}) that every state on $M_n\otimes L^\infty[0,1]$ is CP-approximable. It is the abundance of projections in $M_n\otimes L^\infty[0,1]$ that allows one to show that all states are CP-approximable, hence it is natural to consider if all states on a real rank zero C*-algebra are CP-approximable.  Applying a deep theorem of  Kirchberg we see that real rank zero doesn't help:
\begin{cor} Let $M_{2^\infty}$ be the CAR algebra.  Then there is a state $\psi$ on $M_{2^\infty}$ that is not CP-approximable.  Since $M_{2^\infty}$ is simple, the GNS representation associated with $\psi$ is necessarily faithful.
\end{cor}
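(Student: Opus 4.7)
The plan is to transfer the non CP-approximable state $\phi$ on $M_2\otimes C[0,1]$ from Proposition \ref{prop:example} to the CAR algebra. The deep theorem of Kirchberg I expect to invoke asserts that every unital separable nuclear C*-algebra $A$ admits a unital embedding $\iota\colon A\hookrightarrow M_{2^\infty}$ together with a UCP conditional expectation $E\colon M_{2^\infty}\to \iota(A)$ (with $E\circ\iota=\mathrm{id}_A$); since $M_2\otimes C[0,1]$ is unital, separable, and nuclear, this applies in our setting. Granted this, I would set
\begin{equation*}
\psi:=\phi\circ\iota^{-1}\circ E,
\end{equation*}
which is a state on $M_{2^\infty}$ because $E$ is UCP.

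Next I would show by contradiction that $\psi$ is not CP-approximable. If $(T_\alpha)$ is a net of finite-rank, UCP, $\psi$-preserving maps on $M_{2^\infty}$ that converge pointwise to $\mathrm{id}_{M_{2^\infty}}$, then the maps
\begin{equation*}
S_\alpha:=\iota^{-1}\circ E\circ T_\alpha\circ\iota\colon M_2\otimes C[0,1]\longrightarrow M_2\otimes C[0,1]
\end{equation*}
are UCP, finite rank (their image sits inside $E$ applied to the finite-dimensional range of $T_\alpha$), and converge pointwise to the identity on $M_2\otimes C[0,1]$. The $\phi$-preservation is a one-line telescope: $\phi\circ S_\alpha=\psi\circ T_\alpha\circ\iota=\psi\circ\iota=\phi$. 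Thus $\phi$ would be CP-approximable, contradicting Proposition \ref{prop:example}.

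Finally, the GNS faithfulness is essentially free: $\psi$ is a genuine state (since $\psi(1)=\phi(E(1))=1$), so $\pi_\psi\neq 0$, and simplicity of $M_{2^\infty}$ forces $\ker\pi_\psi=0$.

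The main obstacle is invoking the right Kirchberg embedding result: the existence of a conditional expectation from $M_{2^\infty}$ onto a nuclear C*-subalgebra is not automatic, so this is precisely where the ``deep theorem'' does its work. If the available form of Kirchberg's result only produces the embedding $\iota$ without a left inverse, one would instead need to extend $\phi$ to a state $\psi$ on $M_{2^\infty}$ in a way compatible with some partial transfer of approximating maps, which is substantially more delicate; but in either formulation the whole point of the corollary is that the ``bad'' state on $M_2\otimes C[0,1]$ is the obstruction being pushed up into a real-rank-zero (in fact simple, AF) nuclear algebra.
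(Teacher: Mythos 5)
Your proof is correct and takes essentially the same route as the paper: transfer the bad state $\phi$ on $M_2\otimes C[0,1]$ through Kirchberg's factorization of this algebra through the CAR algebra, and pull back any $\psi$-preserving finite-rank UCP approximation of the identity to contradict Proposition \ref{prop:example}. The only remark worth making is that the paper invokes Kirchberg's Corollary 1.5 in the weaker form of UCP maps $T:M_2\otimes C[0,1]\to M_{2^\infty}$ and $S:M_{2^\infty}\to M_2\otimes C[0,1]$ with $ST=\mathrm{id}$ --- no multiplicativity of $T$ and no conditional expectation are needed, since your computation goes through verbatim with $S$ in place of $\iota^{-1}\circ E$ and $T$ in place of $\iota$ --- so the ``main obstacle'' you flag at the end is not actually an obstacle.
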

\begin{proof}
By \cite[Corollary 1.5]{Kirchberg95} there are unital completely positive maps $T:M_2\otimes C[0,1]\rightarrow M_{2^\infty}$ and $S:M_{2^\infty}\rightarrow M_2\otimes C[0,1]$ such that $ST=id.$  By Proposition \ref{prop:example}, it follows that $\psi=\phi\circ S$ is not CP-approximable.
\end{proof}
Finally, Proposition \ref{prop:example} leaves open the natural question:
\begin{question}
Does the reduced  free product of $(M_2\otimes C[0,1],\phi)$ with itself have the CCAP?
\end{question}
\section*{Acknowledgement} J'aimerais remercier \'Eric Ricard, Jean Roydor et Quanhua Xu pour les discussions enrichissantes concernant ces travaux.


\begin{thebibliography}{1}

\bibitem{Bozejko93}
Marek Bo{\.z}ejko and Massimo~A. Picardello.
\newblock Weakly amenable groups and amalgamated products.
\newblock {\em Proc. Amer. Math. Soc.}, 117(4):1039--1046, 1993.

\bibitem{Canniere85}
Jean De~Canni{\`e}re and Uffe Haagerup.
\newblock Multipliers of the {F}ourier algebras of some simple {L}ie groups and
  their discrete subgroups.
\newblock {\em Amer. J. Math.}, 107(2):455--500, 1985.

\bibitem{Dykema04}
Kenneth~J. Dykema.
\newblock Exactness of reduced amalgamated free product {$C^*$}-algebras.
\newblock {\em Forum Math.}, 16(2):161--180, 2004.

\bibitem{Kirchberg95}
Eberhard Kirchberg.
\newblock On subalgebras of the {CAR}-algebra.
\newblock {\em J. Funct. Anal.}, 129(1):35--63, 1995.

\bibitem{Ricard06}
{\'E}ric Ricard and Quanhua Xu.
\newblock Khintchine type inequalities for reduced free products and
  applications.
\newblock {\em J. Reine Angew. Math.}, 599:27--59, 2006.

\bibitem{Rudin83}
Walter Rudin.
\newblock Well-distributed measurable sets.
\newblock {\em Amer. Math. Monthly}, 90(1):41--42, 1983.

\end{thebibliography}
\end{document}